    \newtheorem{thm}{Theorem}                     [section]
    \newtheorem{thm*}{Theorem}
    \newtheorem{lemma*}{Lemma}    
\newcommand{\hilb}{\mathcal H}
\newcommand{\scR}{\mathscr R}
\newcommand{\euE}{\EuScript E}
\newcommand{\clK}{\mathcal K}
\newcommand{\mbR}{\mathbb R}
\newcommand{\eps}{\varepsilon}
\begin{document}
\title[Limiting absorption principle for perturbed operator]{Limiting absorption principle\\ for perturbed operator}
\author{Nurulla Azamov}
\address{Independent scholar, Adelaide, SA, Australia}
\email{azamovnurulla@gmail.com}
 \keywords{limiting absorption principle, singular spectrum}
 \subjclass[2000]{ 
     Primary 47A40;
 }
\begin{abstract} 
In this note the following theorem is proved.
Let $\hilb$ and $\clK$ be Hilbert spaces. 
Let $H_0$ be a self-adjoint operator on $\hilb,$  $F \colon \hilb \to \clK$ be a closed $|H_0|^{1/2}$-compact operator, and $J \colon \clK \to \clK$ be a bounded self-adjoint operator.
If the operator 
$$
    F (H_0 - \lambda -  iy)^{-1} F^*
$$
has norm limit as $y \to 0^+$ for a.e.~$\lambda,$ then so does the operator 
$$
    F (H_0 + F^*JF - \lambda -  iy)^{-1} F^*.
$$
\end{abstract}
\maketitle

\bigskip 

\section{Introduction}
The limiting absorption principle (LAP) is one of the main tools for studying the essential spectrum of a self-adjoint operator.
Roughly speaking, LAP asserts that the resolvent of a self-adjoint operator, properly regularised, converges a.e. in an appropriate topology as the spectral parameter approaches the essential spectrum.
Importance of LAP for spectral, scattering and PDE theories is well-known,  --- it suffices to say that LAP forms a foundation of the stationary approach to scattering theory, see e.g. \cite{Agm, AMG, BW, Kur, YaBook}.

LAP can be formulated in many different ways, depending on a choice of regularisation of the resolvent, the topology of convergence and whether the regularised resolvent is required to converge everywhere or almost everywhere. 
The one which we use is as follows. Let $H_0$ be a self-adjoint operator on a (complex separable) Hilbert space~$\hilb,$ which is endowed with a rigging in the form of 
a closed $|H_0|^{1/2}$-compact operator~$F$ with zero kernel and co-kernel, acting from~$\hilb$ to possibly another Hilbert space~$\clK.$  The pair $(\hilb,F)$ we call a \emph{rigged Hilbert space}. 
We say that~$H_0$ obeys LAP (in the norm topology, unless specified otherwise), if the sandwiched resolvent of $H_0,$ 
\begin{equation} \label{F0}
   T_{ \lambda +  iy}(H_0) :=    F  R_{ \lambda +  iy}(H_0) F^*,
\end{equation}
has the norm limit,    $T_{ \lambda +  i0}(H_0),$ for a.e. $\lambda \in \mbR$ as $y \to 0^+.$

A natural question is whether LAP holds for a perturbed operator of the form $H_0 + F^*JF,$ if it holds for $H_0,$
where $J$ is a bounded self-adjoint operator on $\clK.$  
An affirmative answer to this question is seemingly obvious. Indeed, by the second resolvent identity, we have 
\begin{equation} \label{F2}
   T_{\lambda + iy}(H_r) = T_{\lambda + iy}(H_0) (1 + r J T_{\lambda + iy}(H_0) )^{-1},
   \end{equation}
where $H_r = H_0 + r F^*JF.$ Hence, provided $T_{\lambda + i0}(H_0)$ exists for a given choice of~$\lambda,$
the norm limit    $T_{\lambda + i0}(H_r)$ exists if and only if the operator 
\begin{equation} \label{F1}
   1 + r J T_{\lambda + i0}(H_0)
\end{equation}
is invertible. While this operator can fail to be invertible for some values of $r,$ by analytic Fredholm alternative the set of such values is discrete, and so,
it seems unlikely that the operator (with a particular choice $r=1$ of the coupling variable~$r$)
$$
   1 + J T_{\lambda + i0}(H_0)
$$
may fail to be invertible for a set of~$\lambda$'s of positive Lebesgue measure. However, a formal proof of such statement encounters some difficulties. 
In this paper we provide such a proof. Namely, we prove the following 
\begin{thm}  \label{T1}
Let $H_0$ be a self-adjoint operator acting on a rigged Hilbert space $(\hilb,F),$ as defined above.
If the limiting absorption principle in the norm topology holds for~$H_0,$ then it also holds for any $H_0 + F^*JF,$ where $J$ is a bounded self-adjoint operator on $\clK.$
\end{thm}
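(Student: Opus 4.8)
The plan is to reduce the statement, via the second resolvent identity \eqref{F2}, to a purely analytic assertion about boundary values, and then to settle that assertion with a boundary uniqueness theorem for functions of bounded type. Writing $H_r = H_0 + rF^*JF$ and absorbing the coupling into $J$, it suffices to treat $r=1$; by \eqref{F2}, at every $\lambda$ for which $T_{\lambda+i0}(H_0)$ already exists (a full-measure set, by hypothesis) the limit $T_{\lambda+i0}(H_1)$ exists precisely when $1 + JT_{\lambda+i0}(H_0)$ is invertible on $\clK$. Since $F$ is $|H_0|^{1/2}$-compact, $T_z(H_0) = FR_z(H_0)F^*$ is compact for $\operatorname{Im} z > 0$, and a norm limit of compact operators is compact; hence $JT_{\lambda+i0}(H_0)$ is compact, and non-invertibility of $1 + JT_{\lambda+i0}(H_0)$ means exactly that $-1$ is an eigenvalue. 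Thus everything comes down to showing that the set
\[
   S = \set{\lambda \colon T_{\lambda+i0}(H_0) \text{ exists and } 1 + JT_{\lambda+i0}(H_0) \text{ is not invertible}}
\]
is Lebesgue-null.

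Next I would record the two structural facts that drive the argument. First, for every real $r$ and every non-real $z$ the operator $H_r - z$ is boundedly invertible, and the factorisation $H_r - z = (H_0 - z)\brs{1 + rR_z(H_0)F^*JF}$, together with the standard equivalence between invertibility of $1 + AB$ and of $1 + BA$, shows that $1 + rJT_z(H_0)$ is invertible throughout the upper half-plane $\set{\operatorname{Im} z > 0}$. Second, $\operatorname{Im} T_z(H_0) = F\,(\operatorname{Im} R_z(H_0))\,F^* \geq 0$ there, so for each fixed $\phi \in \clK$ the scalar function $z \mapsto \la T_z(H_0)\phi, \phi\ra$ is a Nevanlinna (Herglotz) function, hence of bounded type; by polarisation every matrix element $z \mapsto \la T_z(H_0)\phi, \psi\ra$ is of bounded type as well.

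The heart of the proof is a boundary uniqueness argument, which I would first carry out when $J$ has finite rank (equivalently when the relevant block of $T_z$ is finite-dimensional). In that case $D(z) := \det\brs{1 + JT_z(H_0)}$ is a finite algebraic combination of matrix elements of $T_z(H_0)$, hence a function of bounded type on the upper half-plane; it is zero-free there, since $1 + JT_z(H_0)$ is invertible; it has finite boundary values a.e.; and $D \not\equiv 0$ because $\norm{T_z(H_0)} \to 0$, so $D(z) \to 1$, as $\operatorname{Im} z \to +\infty$. In finite dimensions the determinant is continuous and detects invertibility, so $S \subseteq \set{\lambda \colon D(\lambda + i0) = 0}$. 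By Privalov's uniqueness theorem a non-trivial function of bounded type cannot vanish on a set of positive measure (the modulus of its boundary function equals that of its outer factor, which is positive a.e.), whence $\abs{S} = 0$. The decisive point, relative to the naive reasoning of the introduction, is that this argument applies to the single prescribed coupling $r=1$ and never passes through a ``generic $r$'': the Fubini argument built from the discreteness of the bad set $\set{r \colon 1 + rJT_{\lambda+i0}(H_0) \text{ not invertible}}$ for fixed $\lambda$ only yields LAP for almost every $r$, and it is exactly the jump to an arbitrary prescribed $r$ that the bounded-type/Privalov mechanism supplies.

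The main obstacle is the passage from finite rank to a general $|H_0|^{1/2}$-compact $F$, where $JT_z(H_0)$ is merely compact and no Fredholm determinant is available. I would handle this by approximating $J$ by finite-rank truncations $J_n$ adapted to the spectral decomposition of the relevant compact operator and controlling the boundary values; alternatively, whenever $JT_z(H_0)$ lies in a Schatten class one may replace $\det$ by a regularised determinant $\det_p$, which is again of bounded type and zero-free in the interior, so that Privalov's theorem applies verbatim. The technical difficulty is to ensure that the null sets produced at each truncation assemble into a single null set and that non-invertibility of the limit is genuinely captured in the limit; this I expect to be the crux, and it should follow by combining the compactness of $JT_{\lambda+i0}(H_0)$ with the a.e.\ norm convergence granted by the hypothesis, so that the presence of the eigenvalue $-1$ in the limit already forces a quantitative degeneracy visible at finite level.
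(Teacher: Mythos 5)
Your reduction via \eqref{F2} coincides with the paper's, and your finite-rank case is correct and even elegant: for finite-rank $J$ the function $D(z)=\det\brs{1+JT_z(H_0)}$ is single-valued, of bounded type (its entries are polarised Herglotz functions), zero-free in the open upper half-plane, and tends to $1$ as $\Im z\to+\infty$, so the Luzin--Privalov uniqueness theorem annihilates the bad set. Note that the single-valuedness of the determinant is precisely what lets you sidestep the branching of the coupling resonance functions, which is the difficulty the paper's own proof has to confront head-on.

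The genuine gap is exactly where you place it: the passage from finite rank to a general bounded $J$, and neither of your two proposed fixes can work in this setting. The Schatten route is unavailable because the hypothesis only gives that $T_z(H_0)$ is \emph{compact} (from $|H_0|^{1/2}$-compactness of $F$) --- there is no $\mathcal L_p$ membership anywhere, hence no regularised determinant $\det_p$ to apply Privalov to. The truncation route fails for a structural reason: $J$ is bounded but not compact, so it admits no norm approximation by finite-rank operators; taking $J_n=P_nJP_n$ with $P_n\to 1$ strongly does give $J_nT_{\lambda+i0}(H_0)\to JT_{\lambda+i0}(H_0)$ in norm at each good $\lambda$ (by compactness of $T_{\lambda+i0}$), and each exceptional set $S_n$ is null, but off $\bigcup_n S_n$ you only know that every $1+J_nT_{\lambda+i0}(H_0)$ is invertible, and invertibility is an open, not a closed, condition --- it does not pass to norm limits. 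Your hoped-for ``quantitative degeneracy visible at finite level'' is exactly the missing ingredient: the finite-level determinants are nonzero a.e.\ but admit no uniform lower bound, so an eigenvalue $-1$ of the limit operator yields no contradiction. This is the same trap, in disguise, as the naive coupling-constant argument criticised in the introduction: countably many a.e.-statements about approximants do not control the one prescribed limit object. The paper escapes it not by determinants but by importing a substantial external result (Theorem \ref{T0}, from arXiv:2109.04675) on the structure of the multi-valued analytic family of coupling resonance functions --- off a compact set of almost full measure only finitely many branches meet $[0,1]$, and those branches are single-valued --- after which the Herglotz boundary-value argument runs branch by branch. Without an input of comparable strength (or a Schatten-class hypothesis legitimising your determinant), the compact case in your proposal remains unproven.
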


\section{Proof}
First I recall some definitions from \cite{AzSFIES}. Let $H_0,$ $F$ and $J$ be as above. 
Let $V = F^*JF,$ and let $z$ be a complex number from the resolvent set of $H_0.$
We say that a complex number $r_z$ is a \emph{(coupling) resonance point} of the pair $H_0,V,$ if the operator 
$$
    1 + r_z T_z(H_0) J
$$
is not invertible. The collection of all resonance points $r_z$ forms a multi-valued holomorphic function, $\scR(z),$  on the resolvent set of $H_0.$ 
The analytic function $\scR(z)$ can have singularities such as branching points. If we assume that the function $\scR(z)$ has no singularities, that is, if it is simply a collection of single-valued holomorphic functions $r_j(z),$
then proof of Theorem \ref{T1}  would be easy. Indeed, in this case each of the coupling resonance functions $r_j(z)$ is either a Herglotz function or a negative of a Herglotz function.
A Herglotz function has limit values $r_j(\lambda+i0)$ for a.e.~$\lambda,$ and the set of~$\lambda$ for which $r_j(\lambda+i0)$ is equal to a pre-given value, say, $1,$ has Lebesgue measure zero
(for these classical properties of Herglotz functions see e.g. \cite{Don, Ho}). 
Since the collection of resonance functions $r_1(z), r_2(z), \ldots$ is countable, each of the operators 
$$
    1 + r_j(\lambda+i0) T_{\lambda+i0}(H_0) J, \ j = 1,2, \ldots, 
$$
is invertible on a common set of~$\lambda$'s of full Lebesgue measure. 
From the argument given in the introduction, see \eqref{F1}, it is clear that $T_{\lambda+i0}(H_0+F^*JF)$ exists if and only if for some resonance point we have $r_{\lambda+i0} = 1.$ 
This completes the proof. 

\smallskip
In the general case this argument needs an adjustment, since $\scR(z)$ can have singularities. 
To overcome them, we use the following theorem, see~\cite[Theorem 2.1]{AzSFIESIII}.

\begin{thm} \label{T0}   Assume that $H_0$ is a self-adjoint operator on a rigged Hilbert space $(\hilb,F),$ which obeys LAP in the norm topology.  Let $I \subset \mbR$  be an open interval.
Then for any $\eps>0$ there exists a compact set $K \subset I$ such that $|I\setminus K| < \eps$ and $K$ has a non-tangential neighbourhood $O(K)$ in the upper and lower half-planes 
with the following property: all coupling resonance functions $r_j(z)$ restricted to $O(K)$ either
\begin{enumerate}
   \item  do not take a value in $[0,1],$ or 
   \item they are single-valued continuous functions without branching points in $O(K),$ nor they have any other kind of singularities.
\end{enumerate}
Moreover, the number of coupling resonances obeying scenario (2) is finite. 
\end{thm}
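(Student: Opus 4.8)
The plan is to combine three ingredients: an Egorov--Lusin argument that upgrades the almost-everywhere norm convergence supplied by LAP into \emph{uniform} convergence with continuous boundary values on a large compact set; the compactness of $T_z(H_0)J$, which reduces the study of the resonances meeting $[0,1]$ to a finite-dimensional analytic eigenvalue problem; and a boundary-uniqueness theorem, which lets me discard the thin set of real points near which eigenvalue branches collide. Throughout I set $A(z):=T_z(H_0)J$. Since $F$ is $|H_0|^{1/2}$-compact, $T_z(H_0)=FR_z(H_0)F^*$ is compact and analytic in $z$ off $\sigma(H_0)$, so $A(z)$ is an analytic compact-operator-valued function on $\mbC\setminus\mbR$. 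A number $r_z$ is a resonance exactly when $1+r_zA(z)$ is not invertible, i.e.\ when $-1/r_z$ is a nonzero eigenvalue of $A(z)$; in particular $r_z\in[0,1]$ forces the corresponding eigenvalue $\mu$ to satisfy $\mu\le-1$, so $1\le|\mu|\le\norm{A(z)}$. As $A(z)$ is compact, only finitely many eigenvalues meet this bound for each fixed $z$, which is the source of the finiteness asserted in the theorem.

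First I would pass to a good compact set. By hypothesis $T_{\lambda+iy}(H_0)\to T_{\lambda+i0}(H_0)$ in norm for a.e.\ $\lambda\in I$; applying Egorov's theorem along $y=1/n$ together with Lusin's theorem, I obtain a compact $K_1\subset I$ with $|I\setminus K_1|<\eps/2$ on which the convergence is uniform and on which $\lambda\mapsto T_{\lambda+i0}(H_0)$ is norm-continuous. A short argument then extends $T_z(H_0)$, hence $A(z)$, continuously to $K_1\cup O(K_1)$ for a suitable non-tangential neighbourhood $O(K_1)$: the interior values are analytic, the cone values converge uniformly to the continuous boundary values, and joint continuity on the closed region follows. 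By symmetry between the half-planes (via $z\mapsto\bar z$) the lower half-plane is treated identically, so I argue only in the upper half-plane below.

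Next I would localise the spectrum and reduce to a matrix. On the compact set $\overline{O(K_1)}$ the family $A(z)$ is uniformly bounded and norm-continuous. Working locally near a point $\lambda_0\in K_1$, I separate the eigenvalues of modulus $\ge 1$ from the rest (which accumulate only at $0$) by a Riesz projection $P(z)=\frac{1}{2\pi i}\oint_{|w|=\rho}(w-A(z))^{-1}\,dw$ along a circle $|w|=\rho$, $\rho\in(0,1)$, chosen so that $|w|=\rho$ avoids the spectrum of $A(z)$ over the patch; then $P(z)$ is finite-rank of constant rank $N$ and continuous up to the boundary, and every eigenvalue producing a resonance in $[0,1]$ lies in its range. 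On the patch the problem thus reduces to the eigenvalues of the analytic $N\times N$ matrix $M(z)=P(z)A(z)P(z)$. A compactness/covering argument patches finitely many such patches over $K_1$; since each contributes at most $N$ eigenvalue branches reaching $[0,1]$, only finitely many resonance functions can meet $[0,1]$, which is the finiteness claim. Every remaining branch stays outside $[0,1]$ on the region and realises scenario~(1).

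The main obstacle is to remove the branch points of $M(z)$ from the final region. These are exactly the zeros in $O(K_1)$ of the discriminant $\Delta(z)$ of the characteristic polynomial of $M(z)$, an analytic function in the interior that is continuous up to $K_1$ by the previous steps. The crux is to show that $B:=\{\lambda\in K_1:\Delta(\lambda+i0)=0\}$ is Lebesgue-null, apart from components on which $\Delta\equiv0$ (there the relevant eigenvalues are permanently degenerate, hence still single-valued, and cause no branching). I expect to obtain this from a boundary-uniqueness theorem of Luzin--Privalov type: a not-identically-zero analytic function on a one-sided neighbourhood of $K_1$ cannot have non-tangential boundary values vanishing on a set of positive measure. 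Granting $|B|=0$, I discard $B$ to get a compact $K\subset K_1$ with $|I\setminus K|<\eps$ on which $\Delta(\lambda+i0)\ne0$; by continuity and compactness $\Delta$ is then bounded away from $0$ on a sufficiently thin non-tangential neighbourhood $O(K)$, so $M(z)$ has no branch points there. Its eigenvalues split into single-valued analytic (indeed $\pm$-Herglotz) functions on $O(K)$, giving scenario~(2), while all other resonance branches fall under scenario~(1). Establishing the null-set property of $B$, and arranging the radius $\rho$ to work simultaneously across each patch, is the part I expect to require the most care.
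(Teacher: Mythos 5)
First, a point of comparison: this paper does not actually prove Theorem \ref{T0} --- it imports it wholesale from \cite[Theorem 2.1]{AzSFIESIII} --- so your proposal can only be measured against the source's argument. Your toolkit is the right one, and it is consonant with the hints visible in this paper (a compact $K$ with a non-tangential neighbourhood on which $T_z(H_0)$ extends continuously, finitely many ``dangerous'' resonances isolated by a finite-rank spectral separation, and a Privalov-type boundary uniqueness theorem applied after a Riemann mapping of $O(K)$ onto a half-plane). But as written the proposal has two genuine gaps, beyond the fixable slips (your Riesz projection $P(z)=\frac{1}{2\pi i}\oint_{|w|=\rho}(w-A(z))^{-1}\,dw$ as written captures the spectrum \emph{inside} $|w|=\rho$ and has infinite rank --- the finite-rank object is its complement, or a contour enclosing a neighbourhood of $[-\norm{A},-1]$; and Egorov along the sequence $y=1/n$ alone does not yield continuity on a non-tangential region --- you need to apply Egorov to $g_n(\lambda)=\sup_{0<y\le 1/n}\norm{T_{\lambda+iy}(H_0)-T_{\lambda+i0}(H_0)}$ and then control the \emph{horizontal} variation inside cones, e.g. by a Cauchy estimate, since uniform vertical convergence over $K$ says nothing about points of $O(K)$ whose real part lies in a gap of $K$).

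The first genuine gap is a global--local mismatch. The alternative in Theorem \ref{T0} is asserted for each resonance function on \emph{all} of $O(K)$, but your radii $\rho$, ranks $N$, and discriminants $\Delta$ are chosen patchwise, and you correctly anticipate that no single $\rho$ can work across $O(K)$ since eigenvalues of $A(z)$ may cross any fixed circle as $z$ traverses the region. Consequently a single global branch can lie in the finite part over one patch --- where it meets $[0,1]$ and your argument excludes its branch points --- and lie in the ``small'' part over another patch, where nothing you have done prevents it from branching (by colliding with an eigenvalue far from $(-\infty,-1]$). Such a function satisfies neither scenario (1) nor scenario (2), so you must additionally excise the boundary trace of \emph{all} collisions involving the finitely many dangerous branches, which requires an argument that these branches stay away from $0$ on $\overline{O(K)}$ (so that the collisions remain locally algebraic), or some other device making the spectral separation coherent across patches. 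The second gap is that your parenthetical disposal of the components where $\Delta\equiv 0$ is false: an identically vanishing discriminant means only that the characteristic polynomial of $M(z)$ is not square-free, not that its eigenvalues are single-valued --- e.g. a characteristic polynomial of the form $q(w,z)^2$ with $q$ an irreducible quadratic whose two roots interchange around a branch point has $\Delta\equiv 0$ and genuine branching. The standard repair is to pass to the square-free part of the characteristic polynomial and apply the Luzin--Privalov argument to \emph{its} discriminant, which is not identically zero on each component; with that substitution, and with the rectifiability of $\partial O(K)$ noted so that the conformal map preserves boundary null sets (F.~and M.~Riesz), your boundary-uniqueness step goes through.
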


For the definition of ``non-tangential neighbourhood'' see \cite{AzSFIESIII}. 
By the Riemann mapping theorem, the neighbourhood $O(K)$ can be chosen to be conformally equivalent to the upper half-plane. Therefore, the argument above shows that the set of $\lambda \in K$
where the second scenario coupling resonance functions take value~$1$ has Lebesgue measure zero. Since $\eps>0$ and~$I$ are arbitrary, the proof  of Theorem \ref{T1} is complete. 

\medskip 
As an application of Theorem \ref{T1} we prove the following theorem. For definitions of the wave operators $W_\pm(H_1,H_0)$ and the scattering operator $\mathbf S(H_1,H_0)$ see e.g.~\cite{BW,Kur,YaBook}.
\begin{thm} Let $H_0$ be a self-adjoint operator on a rigged Hilbert space $(\hilb,F).$ 
If LAP in the norm topology holds for $H_0$ then for any bounded self-adjoint operator $J$ on $\clK$ the wave operators 
$
   W_\pm (H_0 + F^*JF, H_0)
$
exist and are complete, and therefore, the scattering operator $\mathbf S(H_0+F^*JF, H_0)$ also exists. 
\end{thm}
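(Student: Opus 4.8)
The plan is to derive existence and completeness of the wave operators from LAP for $H_1 := H_0 + F^*JF$, which Theorem~\ref{T1} guarantees once it holds for $H_0$. The standard route in the stationary approach to scattering theory is to show that the boundary values $T_{\lambda+i0}(H_0)$ and $T_{\lambda+i0}(H_1)$ of the sandwiched resolvents, together with $|H_0|^{1/2}$-compactness of $F$, provide enough regularity to invoke a stationary (smooth-perturbation) criterion for existence and completeness of $W_\pm(H_1,H_0)$.

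Let me think about the structure. First I would recall that the perturbation $V = F^*JF$ is factored through $F$, so it is naturally of the form to which trace-class-type or smooth-perturbation scattering theory applies. The key analytic input is LAP: by hypothesis $T_{\lambda+iy}(H_0)$ has a norm limit for a.e.~$\lambda$, and by Theorem~\ref{T1} so does $T_{\lambda+iy}(H_1)$. This says precisely that $F$ is locally $H_0$-smooth and locally $H_1$-smooth on a set of full Lebesgue measure. The second step would be to invoke the abstract existence-and-completeness criterion of the stationary scattering theory --- as developed in the references \cite{BW,Kur,YaBook} --- which states that if $F$ is (locally, in the weak/a.e.\ sense supplied by LAP) both $H_0$-smooth and $H_1$-smooth, and $V = F^*JF$, then $W_\pm(H_1,H_0)$ exist and are complete on the absolutely continuous subspaces. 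I would phrase this as follows: since the absolutely continuous parts of $H_0$ and $H_1$ carry the relevant spectral measures and the boundary values $T_{\lambda+i0}$ exist a.e.\ for both operators, the stationary representation of the wave operators converges, yielding isometric $W_\pm(H_1,H_0)$ whose ranges coincide with the absolutely continuous subspace of $H_1$.

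Concretely, I would proceed in the following order. First, establish that LAP for $H_1$ (via Theorem~\ref{T1}) implies that $F$ is $H_1$-smooth in the almost-everywhere sense needed. Second, record the symmetric fact for $H_0$ from the hypothesis. Third, apply the identity \eqref{F2} to identify, for a.e.~$\lambda$, the boundary scattering amplitude in terms of $T_{\lambda+i0}(H_0)$ and the invertible operator $1 + J T_{\lambda+i0}(H_0)$; this gives the on-shell scattering matrix and hence the construction of $W_\pm$. Fourth, invoke the abstract stationary completeness theorem to conclude existence and completeness simultaneously, and then existence of $\mathbf S(H_1,H_0) = W_+^* W_-$ follows immediately.

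The hard part will be verifying that the a.e.\ (rather than everywhere) existence of the norm limits supplied by LAP is sufficient to feed into the stationary scattering machinery, since the classical smooth-perturbation theorems are often stated under stronger local boundedness of the boundary values. I expect this to be handled by the standard observation that $H_0$-smoothness in the averaged sense $\int \norm{F R_{\lambda+iy}(H_0) F^* u}^2\, d\lambda$ being controlled is equivalent to the a.e.\ existence of boundary values on a full-measure set, together with the $|H_0|^{1/2}$-compactness of $F$ which upgrades weak convergence to the norm convergence required for completeness. Once this equivalence is in place, the remaining steps are routine applications of the cited stationary scattering theory, so I would keep the exposition brief and refer to \cite{BW,Kur,YaBook} for the underlying theorems.
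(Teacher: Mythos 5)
Your proposal is correct and takes essentially the same route as the paper: the entire content of the paper's proof is to note that the classical stationary scattering theorem (cited there as \cite{BE, YaBook, AzDaMN}) requires LAP for both $H_0$ and $H_0+F^*JF$, and that Theorem \ref{T1} supplies the latter. Your additional sketch of the smoothness machinery inside that classical theorem is material the paper simply delegates to the references, so no genuinely different argument is involved.
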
 
\begin{proof} This is of course a classical theorem of stationary scattering theory, see e.g. \cite{BE, YaBook}, and \cite{AzDaMN} for a recent new proof, but its premise requires LAP to hold for both~$H_0$ and $H_0 + F^*JF.$ 
Theorem \ref{T1} ensures this. 
\end{proof}

\medskip
Theorem \ref{T1} admits an obvious generalisation. Let $\euE$ be an invariant operator ideal (see e.g. \cite{SiTrId2} for definition). 
We say that a self-adjoint operator $H_0$ obeys LAP in $\euE,$ if the sandwiched resolvent $T_z(H_0)$ belongs to $\euE$ and 
the limit \eqref{F0} exists in $\euE$-norm. An invariant operator ideal version of Theorem \ref{T0} also holds, see \cite{AzSFIESIII}. 
It obviously follows from the sandwiched version of the second resolvent identity \eqref{F2} and the ideal property of $\euE$
that if $H_0$ obeys LAP in $\euE$ and the operator \eqref{F1} is invertible, then $H_r$ also obeys LAP in $\euE.$ 
Therefore, precisely the same argument as above proves the following 
\begin{thm}  \label{T3}
Let $H_0$ be a self-adjoint operator acting on a rigged Hilbert space $(\hilb,F),$ as defined above. Suppose $T_z(H_0) $ belongs to an invariant operator ideal $\euE.$
If the limiting absorption principle for~$H_0$ holds in $\euE,$  then it holds in $\euE$ for any $H_0 + F^*JF,$ where $J$ is a bounded self-adjoint operator on $\clK.$
\end{thm}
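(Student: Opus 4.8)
The plan is to run the very same resonance-function argument that proved Theorem~\ref{T1}, being careful at each step that the convergence takes place in the $\euE$-norm rather than merely in the operator norm. The starting point is again the sandwiched second resolvent identity \eqref{F2} with the coupling value $r=1$, applied to $H_1 = H_0 + F^*JF$. Since $T_z(H_0) \in \euE$ by hypothesis, and $J$ together with the inverse $(1 + J T_{\lambda+iy}(H_0))^{-1}$ (where it exists) are bounded operators, the ideal property of $\euE$ guarantees that $T_{\lambda+iy}(H_1) \in \euE$ as well. The decisive point is that multiplication by a bounded operator is continuous in the $\euE$-norm; hence for any $\lambda$ at which the limit $T_{\lambda+i0}(H_0)$ exists in $\euE$ and the operator $1 + J T_{\lambda+i0}(H_0)$ of \eqref{F1} is invertible, letting $y \to 0^+$ in \eqref{F2} produces the $\euE$-norm limit $T_{\lambda+i0}(H_1)$. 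In this way the entire matter reduces, exactly as in the norm case, to showing that the value $1$ is a coupling resonance point $r_{\lambda+i0}$ only for a set of $\lambda$ of Lebesgue measure zero.

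To control this exceptional set I would invoke the invariant operator ideal version of Theorem~\ref{T0}, which holds under the present hypotheses. Fixing an open interval $I \subset \mbR$ and $\eps > 0$, this furnishes a compact set $K \subset I$ with $|I \setminus K| < \eps$ and a non-tangential neighbourhood $O(K)$ on which every coupling resonance function $r_j(z)$ either avoids the segment $[0,1]$ entirely, or is single-valued and continuous with no branching points nor other singularities, with only finitely many functions falling into the latter scenario. On $K$ the first-scenario functions never attain the value $1$, so they contribute nothing. For each of the finitely many second-scenario functions I would use the Riemann mapping theorem to transport $O(K)$ to the upper half-plane, whereupon $r_j$ becomes (up to sign) a Herglotz function, and the classical theory gives that its boundary values equal $1$ only on a set of measure zero. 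A finite union of null sets being null, the set of $\lambda \in K$ at which $1$ is a resonance point is null.

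Letting $\eps \to 0$ and exhausting $\mbR$ by such intervals $I$ then shows that $1$ is a resonance point on a set of full measure only for $\lambda$ in a null set, which completes the argument. I expect the main obstacle to be the same one flagged for Theorem~\ref{T1}, namely the possible branching singularities of the multi-valued function $\scR(z)$, and it is handled in the same way, by the ideal analogue of Theorem~\ref{T0}. The only genuinely new feature relative to the norm case is the verification that the passage to the limit in \eqref{F2} is valid in the stronger $\euE$-topology; this is not a serious difficulty, since it follows immediately from the ideal property of $\euE$ and the continuity of left and right multiplication by bounded operators with respect to the $\euE$-norm.
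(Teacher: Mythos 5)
Your proposal is correct and takes essentially the same route as the paper: the paper likewise reduces Theorem~\ref{T3} to the invertibility of \eqref{F1} via the sandwiched resolvent identity \eqref{F2} together with the ideal property of $\euE$, then invokes the invariant operator ideal version of Theorem~\ref{T0} and repeats the Herglotz/Riemann-mapping argument from the proof of Theorem~\ref{T1}. The only difference is one of exposition: the paper leaves the $\euE$-norm continuity of multiplication by bounded operators implicit (``it obviously follows''), whereas you spell it out.
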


Since convergence in $\euE$ implies norm convergence, the wave and scattering operators also exist in the setting of Theorem \ref{T3}. But in this case a little more can be said. 
Let $H_r = H_0 + r F^*JF.$ 
The scattering operator $\mathbf S(H_1, H_0)$ commutes with~$H_0$ and admits a direct integral decomposition
$$
     \mathbf S(H_1, H_0)  = \int^\oplus _{\Lambda}  S(\lambda; H_1,H_0) \, d\lambda,
$$
where $\Lambda$ is the full set of real numbers $\lambda$ for which both $T_{\lambda+i0}(H_0)$ and $T_{\lambda+i0}(H_1)$ exist,
and $S(\lambda; H_1,H_0)$ is the scattering matrix on the fiber Hilbert space $\mathfrak h_\lambda,$ see e.g. \cite{YaBook} and \cite{AzDaMN} for more details. 
We can state the following: if $T_z(H_0)$ belongs to $\euE(\clK)$ and LAP for $H_0$ holds in $\euE(\clK)$ then for any $J$ as above the scattering matrix  $S(\lambda; H_1,H_0)$ exists for a.e. $\lambda$ and belongs to~$\euE(\mathfrak h_\lambda).$
In particular, in the special case of the trace class ideal $\euE = \mathcal L_1(\mathcal K),$ the Fredholm determinant $\det S(\lambda; H_1,H_0)$ and the spectral shift function $\xi(\lambda; H_1, H_0)$ are well-defined a.e. and the Birman-Krein formula 
$\det S(\lambda; H_1,H_0) = e^{-2 \pi i \xi(\lambda; H_1, H_0)}$ holds for a.e. $\lambda,$ provided $T_{\lambda+i0}(H_0)$ exists a.e. in the trace class norm. The point here is that all these conclusions can be made without assuming the same about 
$T_{\lambda+i0}(H_1).$

\medskip Further, everything said above clearly applies to the case where the limit $T_{\lambda+i0}(H_0),$ whether in norm or in $\euE,$ exists a.e. only in an open interval~$I.$ 
In this case the wave operators should be replaced by partial wave operators.

Finally, it is possible that although $T_z(H_0)$ belongs to an invariant operator ideal~$\euE,$ the limit $T_{\lambda+i0}(H_0)$ does not exist for a.e. $\lambda$ in the norm of $\euE,$
but the limit of the imaginary part $\Im T_{\lambda + i0}(H_0)$ does. In this case by the same argument one infers that the limit of the imaginary part $\Im T_{\lambda + i0}(H_1)$ 
of the perturbed operator also exists for a.e. $\lambda$ in the norm of $\euE,$ via a well-known formula
$$
   \Im T_{z}(H_1) = (1 + T_{\bar z}(H_0) J )^{-1}  \Im T_{z}(H_0) (1 + J T_{z}(H_0) )^{-1}.
$$
To see this it suffices to recall that if $r_z$ is a resonance point corresponding to $z,$ then $\bar r_z$ is a resonance point corresponding to $\bar z,$
and thus existence of $r_{\lambda+i0}$ is equivalent to the existence of $r_{\lambda-i0}.$ This is not unrelated to scattering theory, since if $\Im T_{\lambda + i0}(H_0)$ belongs to an invariant operator ideal $\euE(\clK),$
then the scattering matrix $S(\lambda; H_1, H_0)$ also belongs to $\euE(\mathfrak h _\lambda).$

\bigskip
{\it Acknowledgements.} I thank my wife for financially supporting me during the work on this paper.

\end{document}